\renewcommand{\aa}{{\bm a}}
\newcommand{\bb}{{\bm b}}
\newcommand{\BB}{\mathcal{B}}
\newcommand{\bbox}{\operatorname{box}}
\newcommand{\CC}{\mybb{C}}
\newcommand{\ii}{{\mathbf i}}
\newcommand{\mybb}[1]{\mathbf{#1}}
\newcommand{\non}{\operatorname{non}}
\newcommand{\wt}{{\rm wt}}
\newcommand{\zz}{{\bm z}}
\newcommand{\ZZ}{\mybb{Z}}
\theoremstyle{plain}
\newtheorem{thm}{Theorem}[section]
\newtheorem{lemma}[thm]{Lemma}
\newtheorem{prop}[thm]{Proposition}
\theoremstyle{definition}
\newtheorem{dfn}[thm]{Definition}
\newtheorem{ex}[thm]{Example}
\newtheorem{remark}[thm]{Remark}
\numberwithin{equation}{section}
\numberwithin{figure}{section}
\numberwithin{table}{section}
\begin{document}
\title{Combinatorics of the Casselman-Shalika formula in type $A$}
\author{Kyu-Hwan Lee}
\address{Department of Mathematics \\ University of Connecticut \\  Storrs, CT 06269-3009, U.S.A. and Korea Institute for Advanced Study, Seoul, Korea}
\email{khlee@math.uconn.edu}
\author{Philip Lombardo}
\address{Department of Mathematics and Computer Science \\ St.\ Joseph's College \\ Patchogue, NY 11772, U.S.A.}
\email{plombardo@sjcny.edu}
\author{Ben Salisbury}
\address{Department of Mathematics \\ University of Connecticut \\ Storrs, CT 06269-3009, U.S.A.}
\email{benjamin.salisbury@uconn.edu}
\keywords{Casselman-Shalika formula, crystals, Young tableaux}
\subjclass[2000]{Primary 17B37; Secondary 05E10}

\maketitle

\begin{abstract}
In the recent works of Brubaker-Bump-Friedberg, Bump-Nakasuji, and others, the product in the Casselman-Shalika formula is written as a sum over a crystal. The coefficient of each crystal element is defined using the data coming from the whole crystal graph structure. In this paper, we adopt the tableaux model for the crystal and obtain the same coefficients using data from each individual tableaux; i.e., we do not need to look at the graph structure.  We also show how to combine our results  with tensor products of crystals to obtain the sum of coefficients for a given weight. The sum is a $q$-polynomial which exhibits many interesting properties. We use examples to illustrate these properties.

\end{abstract}


\section{Introduction}

Let $F$ be a $p$-adic field with ring of integers $\mathfrak{o}_F$ and residue field of size $q$. We denote by $\varpi$ a uniformizer of $\mathfrak o_F$. Suppose $N^-$ is the maximal unipotent subgroup of $\operatorname{GL}_{r+1}(F)$ with maximal torus $T$, and $f^\circ$ denotes the standard spherical vector corresponding to an unramified character $\chi$ of $T$. Let $T(\CC)$ be the maximal torus in the  dual group $\operatorname{GL}_{r+1}(\CC)$ of $\operatorname{GL}_{r+1}(F)$, and let $\zz \in T(\CC)$ be the element corresponding to $\chi$ via the Satake isomorphism.

For a dominant integral weight $\lambda = (\lambda_1\ge\lambda_2\ge\cdots\ge\lambda_{r+1})$, we define
\[
\psi_\lambda\left(\begin{array}{cccc}
1 & & & \\
x_{2,1} & 1 & & \\
\vdots & & \ddots & \\
x_{r+1,1} & \cdots & x_{r+1,r} & 1
\end{array}\right) = \psi_0(\varpi^{\lambda_1-\lambda_2}x_{r+1,r} + \cdots + \varpi^{\lambda_r-\lambda_{r+1}}x_{2,1}),
\]
where $\psi_0$ is a fixed additive character on $F$ which is trivial on $\mathfrak{o}_F$ but not on $\mathfrak{p}^{-1}$.  Let $\chi_\lambda(\zz)$ be the irreducible character of $\operatorname{GL}_{r+1}(\CC)$ with highest weight $\lambda$.  Then the Casselman-Shalika formula is
\begin{equation}\label{eq:CS}
\int_{N^-(F)} f^\circ({\bm n})\psi_\lambda({\bm n}) \,\mathrm{d}{\bm n} = \prod_{\alpha>0}
(1-q^{-1}\zz^{\alpha})\chi_\lambda(\zz).
\end{equation}

Recently, in the works \cite{BBF:11,BN:10} of Brubaker-Bump-Friedberg and Bump-Nakasuji, the product is written as a sum over the crystal $\BB(\lambda+\rho)$. (See
also \cite{McN:11}.) More precisely, they prove
\begin{equation}\label{BN-CS}
\chi_\lambda(\zz)\prod_{\alpha>0} (1-q^{-1}\zz^{\alpha}) = \sum_{b\in\BB(\lambda+\rho)} G_\ii(b)q^{-\langle w_\circ(\wt(b) -\lambda -\rho),\rho\rangle} \zz^{w_\circ(\wt(b) - \rho)},
\end{equation}
where $\rho$ is the half-sum of the positive roots, $\wt(b)$ is the weight of $b$, and the coefficients $G_\ii(b)$ are defined using so-called BZL paths.
Since the definition of the coefficients $G_\ii(b)$ involves computing BZL paths, it is necessary to compute the entire crystal graph.

The crystal $\BB(\lambda +\rho)$ has a combinatorial realization using the set of semistandard Young tableaux due to Kashiwara and Nakashima (\cite{KN:94}). Hence each vertex of a crystal graph can be represented by a semistandard tableau. The goal of this paper is to define the coefficients in the sum of the Casselman-Shalika formula using only tableau data for each vertex of the crystal.  Indeed, for a tableau $b\in \BB(\lambda+\rho)$,
we define $\aa_{i,j}$ to be the number of $(j+1)$-colored boxes in rows $1$ through $i$ for $1 \leq i \leq j \leq r$, and  $\bb_{i,j}$ to be  the number of boxes in the $i$th row which have color greater or equal to $j+1$ for $1 \leq i \leq j \leq r$. Then we define the coefficients $C_{\lambda+\rho, q}(b)$ using these numbers $\aa_{i,j}$ and $\bb_{i,j}$, and prove \begin{equation}\label{eq:Cfunction}
\zz^\rho\chi_\lambda(\zz)\prod_{\alpha>0}(1-q^{-1}\zz^{-\alpha}) = \sum_{b\in \BB(\lambda+\rho)} C_{\lambda+\rho,q}(b)\zz^{\wt(b)}.
\end{equation}
Here the use of negative roots is only a slight modification for notational convenience; it is in accordance with the usual form of character formula for $\chi_\lambda(\zz)$.

The utilization of tableaux has benefits: we do not need to look at the crystal graph structure of $\BB(\lambda+ \rho)$ to calculate $C_{\lambda+\rho,q}(b)$ for each tableau $b$. Furthermore, the identification of tableaux with crystal elements enables us to easily exploit the compatibility property of crystals under tensor product and brings about new results.
Indeed, in the last section,  we consider how to combine our results in Section \ref{sec:tableaux}  with tensor products of crystals to obtain the sum of coefficients $C_{\lambda+\rho, q}(b)$ for a given weight $\lambda + \rho$. This sum of coefficients is the same as a $p$-part of Weyl group multiple Dirichlet series \cite{BBF:S,BBCFH:06,BBF:11a,BBF:11,CGco, CG:S} in the non-metaplectic case, and contains a lot of information on related representations as revealed in \cite{KL:11, KL:affine}. We will show this through concrete examples.

\subsection*{Acknowledgements}
At the beginning of this work, K.-H.\ L.\ benefited greatly from the Banff workshop on ``Whittaker Functions, Crystal Bases, and Quantum Groups'' in June 2010 and would like to thank the organizers---B.\ Brubaker, D.\ Bump, G.\ Chinta and P.\ Gunnells.
B.\ S.\ would like to thank all the developers of crystal support in Sage \cite{combinat,sage}.

\section{Crystals}\label{sec:crystals}

Let $r\geq 1$ and suppose $\mathfrak{g} = \mathfrak{sl}_{r+1}$ with simple roots $\{ \alpha_1,\dots,\alpha_r\}$, and let $I = \{1,\dots,r\}$. Let $P$ and $P^+$ denote the weight lattice and the set of dominant integral weights, respectively. Denote by $\Phi$ and $\Phi^+$, respectively, the set of roots and the set of positive roots. Let $\{h_1,\dots,h_r\}$ be the set of coroots and define a pairing $\langle \ ,\ \rangle \colon P^\vee\times P \longrightarrow \ZZ$ by $\langle h,\lambda \rangle = \lambda(h)$, where $P^\vee$ is the dual weight lattice. Let $\mathfrak{h} = \CC \otimes_\ZZ P^\vee$ be the Cartan subalgebra, and let $\mathfrak{h}^*$ be its dual. Denote the {\it Weyl vector} by $\rho$; this is the element $\rho\in \mathfrak{h}^*$ such that $\rho(h_i) = 1$ for all $i$.  The Weyl vector may also be defined as
\[
\rho = \frac12\sum_{\alpha >0} \alpha = \sum_{i=1}^r \omega_i,
\]
where $\omega_i$ is the $i$th fundamental weight.

Let $W$ be the Weyl group of $\mathfrak g$ with simple reflections $\{s_1,\dots,s_r\}$. To each reduced expression $w= s_{i_1}\dots s_{i_k}$ for $w\in W$, we associate a {\it reduced word}, which is defined to be the $k$-tuple of positive integers $\ii = (i_1,\dots,i_k)$, and denote the set of all reduced words $\ii$ of $w\in W$ by $R(w)$. In particular, we let $w_\circ$ be the longest element of the Weyl group and call $\ii = (i_1,\dots,i_N) \in R(w_\circ)$ a {\it long word}, where $N$ is the number of positive roots.

\medskip

A {\it $\mathfrak{g}$-crystal} is a set $\BB$ together with maps
\[
\widetilde e_i, \widetilde f_i\colon \BB \longrightarrow \BB\sqcup\{0\},\ \ \ \ \ \
\varepsilon_i,\varphi_i\colon \BB \longrightarrow \ZZ\sqcup\{-\infty\},\ \ \ \ \ \
\wt\colon \BB \longrightarrow P,
\]
that satisfy a certain set of axioms (see, e.g., \cite{HK:02}).
To each highest weight representation $V(\lambda)$ of $\mathfrak{g}$, there is an associated highest weight crystal $\BB(\lambda)$ which serves as a combinatorial frame of the representation $V(\lambda)$.  In particular, we can express the character $\chi_\lambda(\zz)$ of $V(\lambda)$ in terms of the highest weight crystal:
\[
\chi_\lambda(\zz) = \sum_{b\in\BB(\lambda)} \zz^{\wt(b)}.
\]

When working with tensor products, the combinatorial structure of $\mathcal{B}(\lambda)$ is advantageous because of the {\it tensor product rule} for crystals.
The tensor product rule determines the component of a tensor product on which the Kashiwara operators act. The {\it signature rule} is a systematic way of visualizing this procedure. Let $i\in I$ and set $\BB = \BB_1 \otimes \cdots \otimes \BB_m$. Take $b = b_1\otimes \cdots \otimes b_m \in \BB$. To determine the action of either $\widetilde e_i$ or $\widetilde f_i$ on $b$, create a sequence of $+$ and $-$ according to
\[
(\ \underbrace{-\cdots-}_{\varepsilon_i(b_1)},\underbrace{+\cdots+}_{\varphi_i(b_1)},
\cdots ,
\underbrace{-\cdots-}_{\varepsilon_i(b_m)},\underbrace{+\cdots+}_{\varphi_i(b_m)} \ )
\]
Cancel any $+-$ pair to obtain a sequence of $-$'s followed by $+$'s. We call the resulting sequence the {\it $i$-signature} of $b$, and denote it by $i$-sgn$(b)$. Then $\widetilde e_i$ acts on the component of $b$ corresponding to the rightmost $-$ in $i$-sgn$(b)$ and $\widetilde f_i$ acts on the component of $b$ corresponding to the leftmost $+$ in $i$-sgn$(b)$.

As an illustration, we apply this rule to the semistandard Young tableaux realization of $\mathfrak{sl}_{r+1}$-crystals $\BB(\lambda)$ of highest weight representations for $\lambda$ a dominant integral weight. This description is according to Kashiwara and Naka\-shima \cite{KN:94}.
For the fundamental weight $\omega_1$, the crystal graph of $\BB(\omega_1)$ is given by
\[
\BB(\omega_1): \ \ \ \
\begin{tikzpicture}[scale=1.5,baseline=-4]
 \node (1) at (0,0) {$\young(1)$};
 \node (2) at (1,0) {$\young(2)$};
 \node (d) at (2,0) {$\cdots$};
 \node (n-1) at (3,0) {$\young(r)$};
 \node (n) at (4.2,0) {$\boxed{r+1}$};
 \draw[->] (1) to node[above]{\tiny$1$} (2);
 \draw[->] (2) to node[above]{\tiny$2$} (d);
 \draw[->] (d) to node[above]{\tiny$r-1$} (n-1);
 \draw[->] (n-1) to node[above]{\tiny$r$} (n);
\end{tikzpicture}
\]
Using this fundamental crystal $\BB(\omega_1)$, we may understand any tableaux of shape $\lambda$ by embedding the corresponding crystal $\BB(\lambda)$ into
$\BB(\omega_1)^{\otimes m}$, where $m$ is the number of boxes in the $\lambda$ shape. For example, in type $A_4$, we have
\[
\BB(\omega_1+\omega_2+\omega_3) \ni b= \young(133,34,5) \mapsto
\young(3)\otimes\young(3)\otimes\young(4)\otimes\young(1)\otimes\young(3)\otimes\young(5)
\in \BB(\omega_1)^{\otimes 6}.
\]
On the image of $b$ through this embedding, we may apply the signature rule to determine which boxes $\widetilde f_i$ and $\widetilde e_i$ affect. In this case, with $i=3$, we have $3$-$\operatorname{sgn}(b) = (+,+,-,\cdot,+,\cdot) = (+,\cdot,\cdot,\cdot,+,\cdot)$. Thus $\widetilde e_3b = 0$ and
\[
\widetilde f_3b = \young(134,34,5).
\]

\medskip

For a given $\mathfrak{g}$-crystal $\BB$, long word $\ii =(i_1, i_2, \dots, i_N) \in R(w_\circ)$, and dominant $\lambda$, define the {\it BZL path} $\psi_\ii(b)$ of $b\in \BB$ as follows: Define $a_1$ to be the maximal integer such that $\widetilde e_{i_1}^{a_1} b \neq 0$. Then let $a_2$ be the maximal integer such that $\widetilde e_{i_2}^{a_2} \widetilde e_{i_1}^{a_1} b \neq 0$. Inductively, let $a_j$ be the maximal integer such that
\[
\widetilde e_{i_j}^{a_j} \widetilde e_{i_{j-1}}^{a_{j-1}} \cdots \widetilde e_{i_2}^{a_2} \widetilde e_{i_1}^{a_1} b \neq 0,
\]
for $j=1,\dots,N$.  Finally, we set $\psi_\ii(b) = (a_1,\dots,a_N)$.
The BZL paths are also known as {\it Kashiwara data} or {\it string parametrizations} in the literature (see, for example, \cite{BZ:01,Kam:07}). We may use these terms interchangeably.

Associated to each entry in a given BZL path is a decoration: a circle, a box, or both.
\begin{equation}\label{box-1}
\text{If $\widetilde f_{i_j} \widetilde e_{i_{j-1}}^{a_{j-1}} \cdots \widetilde e_{i_1}^{a_1}b = 0$, then box $a_j$.}\tag{B-I}
\end{equation}
To define the circling rule, we write the BZL paths in a triangular form:
\[
\begin{array}{c@{}c@{}c@{}c@{}c@{}c@{}c}
&&&a_1&&&\\
&&a_2&&a_3&&\\
&a_4&&a_5&&a_6&\\
\iddots&&\vdots&&\vdots&&\ddots
\end{array}
=
\begin{array}{c@{}c@{}c@{}c@{}c@{}c@{}c}
&&&a_{1,1}&&&\\
&&a_{2,1}&&a_{2,2}&&\\
&a_{3,1}&&a_{3,2}&&a_{3,3}&\\
\iddots&&\vdots&&\vdots&&\ddots
\end{array}
\]
We may now define the circling rule according to \cite{BBF:11,BN:10}.
\begin{equation}\label{circ-1}
\text{If $a_{i,j} = a_{i,j+1}$, then circle $a_{i,j}$.}\tag{C-I}
\end{equation}
We understand that the entries outside the triangle are zero, so the rightmost entry of a row is circled if it is zero. When representing these triangles in an inline form, we write
\[
\psi_\ii(b)=(a_{1,1};a_{2,1},a_{2,2};\dots;a_{r,1},\dots,a_{r,r}).
\]

In \cite{BBF:11a, BN:10}, Brubaker-Bump-Friedberg and Bump-Nakasuji define a function $G_\ii(b)$ on $\BB(\lambda+\rho)$ which allows them to write the Casselman-Shalika formula as a sum over $\BB(\lambda+\rho)$ using BZL paths.  From this point forward, we fix $\ii=(1,2,1,\dots,r,r-1,\dots,2,1)$.   Define
\[
G_\ii(b) = \prod_{a\in \psi_\ii(b)} \left\{\begin{array}{cl}
 q^a 	  & \text{if $a$ is circled but not boxed,}\\
 -q^{a-1}  & \text{if $a$ is boxed but not circled,}\\
 (q-1)q^{a-1} & \text{if $a$ is neither circled nor boxed,}\\
 0 	  & \text{if $a$ is both circled and boxed,}
\end{array}\right.
\]

\begin{prop}[{\cite{BN:10}}]\label{prop:BN}
We have
\[
\chi_\lambda(\zz)\prod_{\alpha >0} (1-q^{-1}\zz^{\alpha}) = \sum_{b\in \BB(\lambda+\rho)} G_\ii(b) q^{-\langle w_\circ(\wt(b)-\lambda-\rho),\rho\rangle}\zz^{w_\circ(\wt(b)-\rho)}.
\]
\end{prop}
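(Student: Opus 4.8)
\emph{Proof proposal.}
The plan is to deduce the identity from the crystal character formula $\chi_\lambda(\zz)=\sum_{b\in\BB(\lambda)}\zz^{\wt(b)}$ together with a telescoping of the right‑hand side that builds up the product $\prod_{\alpha>0}(1-q^{-1}\zz^{\alpha})$ one positive root at a time. (One could instead start from \eqref{eq:CS} and compute the $p$-adic integral directly, but a purely combinatorial argument is cleaner.) Concretely, I would induct on the rank $r$, using that the fixed long word factors as $\ii=\ii'\ast(r,r-1,\dots,1)$, where $\ii'\in R(w_\circ')$ is the analogous word for the sub-root-system $\{\alpha_1,\dots,\alpha_{r-1}\}$; the bottom row $(a_{r,1},\dots,a_{r,r})$ of the triangle $\psi_\ii(b)$ is then exactly the block of BZL coordinates that must be summed out to pass from rank $r$ to rank $r-1$.

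Recall that for any reduced word $\chi_\lambda(\zz)=\mathcal D_{i_1}\cdots\mathcal D_{i_N}(\zz^{\lambda})$, where $\mathcal D_i f=(f-\zz^{-\alpha_i}s_i f)/(1-\zz^{-\alpha_i})$, so $\mathcal D_i(\zz^{\mu})=\sum_{k=0}^{\langle h_i,\mu\rangle}\zz^{\mu-k\alpha_i}$ when $\langle h_i,\mu\rangle\ge 0$. The key is to produce the matching Demazure--Lusztig-type operator $\mathcal T_i$ — a $q$-deformation of $\mathcal D_i$ that additionally carries one factor of the shape $(1-q^{-1}\zz^{\pm\alpha})$ — so that, after the twist $\zz\mapsto w_\circ\zz$ (which interchanges $\zz^{\alpha}$ with $\zz^{-\alpha}$ and reconciles the positive-root product on the left with the $w_\circ$ on the right) and the monomial normalization $q^{-\langle w_\circ(\wt(b)-\lambda-\rho),\rho\rangle}$, the left-hand side equals $\mathcal T_{i_1}\cdots\mathcal T_{i_N}$ applied to a suitable extremal monomial built from $\lambda+\rho$ — the $\rho$-shift between the crystal $\BB(\lambda+\rho)$ and the character $\chi_\lambda$ being absorbed into the extra factors that distinguish $\mathcal T_i$ from $\mathcal D_i$. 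One then checks that summing the right-hand side over a single BZL coordinate, with the others fixed, reproduces exactly one application of the corresponding $\mathcal T_{i_j}$: for fixed $b'$ the reachable crystal elements form an $i_j$-string $b',\widetilde f_{i_j}b',\dots,\widetilde f_{i_j}^{m}b'$, the factor of $G_\ii(b)$ attached to that coordinate equals $q^{0}$ when the coordinate is $0$ (its entry is then zero and hence circled by \eqref{circ-1}), equals $(q-1)q^{a-1}$ for an interior value $a$, and equals $-q^{m-1}$ at the maximal value $m$ (where \eqref{box-1} applies, as one more $\widetilde f_{i_j}$ leaves the string), and, weighted by the matching powers of $\zz$, these three cases assemble into the numerator of $\mathcal T_{i_j}$. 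Peeling off the bottom row of the triangle in this way and invoking the rank-$(r-1)$ case on the remaining entries closes the induction, the base case $r=1$ being a one-variable geometric sum.

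The main obstacle is the inductive step's hidden requirement that the decoration rules be \emph{local}. The boxing rule is stated as a condition ($\widetilde f_{i_j}\widetilde e_{i_{j-1}}^{a_{j-1}}\cdots\widetilde e_{i_1}^{a_1}b=0$) about the ambient crystal graph, whereas the argument above needs that after applying $\widetilde e_r^{a_{r,1}}\widetilde e_{r-1}^{a_{r,2}}\cdots\widetilde e_1^{a_{r,r}}$ to $b$ one lands in an honest $\mathfrak{sl}_r$-highest-weight subcrystal, with the boxing and circling conditions on the remaining entries $a_{i,j}$ ($i<r$) coinciding with the intrinsic ones for that smaller crystal and a correctly shifted dominant weight. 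Establishing this amounts to a description of the string polytope $\mathcal C_\ii(\lambda+\rho)$ — the cone cut out by $\ii$ intersected with the $(\lambda+\rho)$-dependent extremal inequalities — under which ``boxed'' means ``the point lies on an upper facet'' and ``circled'' means ``it lies on a lower facet''; I expect this facet bookkeeping, together with its compatibility with the operators $\widetilde e_1,\dots,\widetilde e_{r-1}$, to be the technical heart of the proof.

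Finally, for the word at hand, $\ii=(1,2,1,\dots,r,\dots,2,1)$, this string polytope is the Gelfand--Tsetlin polytope, so a cleaner route may be to rewrite the right-hand side directly as a sum over strict Gelfand--Tsetlin patterns — under which the circling and boxing decorations become the familiar maximality and minimality conditions on entries — and then to invoke Tokuyama's deformation of the Weyl character formula for $\mathfrak{gl}_{r+1}$, whose known proofs (via the six-vertex model, or via Schur-function recursions) carry out exactly the row-by-row telescoping described above, together with the identification of $q^{-\langle w_\circ(\wt(b)-\lambda-\rho),\rho\rangle}\zz^{w_\circ(\wt(b)-\rho)}$ as the homogenization of the Tokuyama monomial.
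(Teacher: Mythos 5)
The paper does not actually prove this proposition: it is imported verbatim from Bump--Nakasuji \cite{BN:10} (see also \cite{BBF:11}), so there is no internal proof to compare against. Judged on its own terms, your proposal lands in the right place but only in its final paragraph. The route actually taken in the cited sources is exactly the one you describe last: for the word $\ii=(1,2,1,\dots,r,\dots,2,1)$ the BZL coordinates of $b\in\BB(\lambda+\rho)$ are the successive differences of a strict Gelfand--Tsetlin pattern with top row $\lambda+\rho$, the circling rule \eqref{circ-1} and boxing rule \eqref{box-1} become the conditions that an entry equals its upper-left (resp.\ upper-right) neighbor, the weights $G_\ii(b)$ become Tokuyama's weights $q^a$, $-q^{a-1}$, $(q-1)q^{a-1}$, $0$, and the identity is precisely Tokuyama's deformation of the Weyl character formula after the $w_\circ$-twist and the $q$-power normalization. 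So the reduction you propose is the standard proof, and it is essentially complete modulo citing Tokuyama. By contrast, the first two-thirds of your write-up --- the telescoping induction with Demazure--Lusztig-type operators $\mathcal T_i$ --- is not a proof as written: you never define $\mathcal T_i$, the claim that the three decoration cases ``assemble into the numerator of $\mathcal T_{i_j}$'' is asserted rather than checked, and, as you yourself flag, the whole induction hinges on the decoration rules being local to each $i_j$-string, which is false in general (whether $a_{i,j}$ is boxed depends on the earlier operators $\widetilde e_{i_1}^{a_1},\dots$ applied to $b$, not just on the position of $b$ in a single string). That locality issue is genuinely the hard point --- it is the content of this paper's own Propositions \ref{prop:circle} and \ref{prop:box} and of much of \cite{BBF:11} --- so the inductive sketch should be regarded as motivation, with the Gelfand--Tsetlin/Tokuyama reduction carrying the actual burden of proof.
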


\section{Using the tableaux model}\label{sec:tableaux}

From now on, we identify the set of semistandard Young tableaux of shape $\lambda$ for a dominant $\lambda$ with the crystal $\BB(\lambda)$ through the realization of Kashiwara-Nakashima \cite{KN:94}.

\begin{dfn}
Let $b\in \BB(\lambda+\rho)$ be a tableau.
\begin{enumerate}
\item Define $\aa_{i,j}$ to be the number of $(j+1)$-colored boxes in rows $1$ through $i$ for $1 \leq i \leq j \leq r$, and define the vector $\aa(b) \in \ZZ_{\ge0}^N$ by
\[
\aa(b)=(\aa_{1,1}, \aa_{1,2}, \ldots, \aa_{1,r}; \aa_{2,2}, \ldots , \aa_{2,r}; \ldots ; \aa_{r,r} ).
\]
\item  The number  $\bb_{i,j}$ is defined to be  the number of boxes in the $i$th row which have color greater or equal to $j+1$ for $1 \leq i \leq j \leq r$.
\item If a  tableaux $b\in \BB(\lambda+\rho)$ does not contain any box such that its entry is strictly greater than any entry in the next row, we say that $b$ is {\it strict}.
\end{enumerate}
\end{dfn}

\begin{ex}
Suppose $r=3$ and $\lambda = \omega_1+\omega_3$.  Consider the tableaux
\[
b_1 = \young(11124,223,34)
\ \ \text{ and }\ \
b_2 = \young(11223,233,34)\ .
\]
Then $b_1$ is not strict because the $4$ in the first row is larger than any entry in the second row.  However, $b_2$ is strict, and we have
\[
\aa(b_2) = (2,1,0;3,0;1).
\]
Moreover, $(\bb_{i,j}(b_2))_{1\leq i\leq j\leq r} = (3,1,0;2,0;1)$.
\end{ex}

\begin{dfn}
Let $b \in \BB(\lambda+\rho)$ be a tableaux.  We define a {\it $k$-segment} of $b$ (in the $i$th row) to be a maximal consecutive sequence of $k$-boxes in the $i$th row for any $i+1 \le k \le r+1$.
\end{dfn}

\begin{ex}
Suppose $r=3$ and $\lambda +\rho = 2\omega_1+2\omega_2 +\omega_3$, and consider the element
\[
b = \young(11234,233,4).
\]
Then there is a $2$-segment in the first row, a $3$-segment in each of the first and second rows, and a $4$-segment in each of the first and third rows.  Neither the $1$-boxes in the first row nor the $2$-box in the second row constitute a  $k$-segment.
\end{ex}

\begin{lemma}\label{lemma:stringbox}
Let $b\in \BB(\lambda+\rho)$. Then the sequences $\psi_{\ii}(b)=(a_{i,j})$ and $\aa(b)=(\aa_{i,j})$ are related via the formula
\[
a_{i,j} = \aa_{i-j+1,i}.
\]
\end{lemma}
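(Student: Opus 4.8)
The plan is to unwind the definition of the BZL path $\psi_\ii(b)$ for the specific long word $\ii = (1,2,1,3,2,1,\dots,r,r-1,\dots,2,1)$ and track, position by position, how the iterated raising operators act on the tableau $b$. Working with the Kashiwara--Nakashima model, I would use the embedding $\BB(\lambda+\rho)\hookrightarrow \BB(\omega_1)^{\otimes m}$ together with the signature rule, so that each $\widetilde e_i$ acts by changing an appropriate $(i+1)$-box into an $i$-box. The key structural observation is that the long word is built in blocks: the $j$th block is $(j,j-1,\dots,2,1)$, and within the computation of $\psi_\ii(b)$ the entries of this block are exactly $a_{j,1},\dots,a_{j,j}$ (after rewriting in triangular form, $a_{i,j}$ with $i$ the row index and $j$ the position). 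I would show, by induction on the block index, that after applying all the raising operators coming from the first $j-1$ blocks, the tableau obtained is again semistandard of the same shape, and its content in colors $\le j$ has been maximally pushed down — more precisely, that the relevant counts of $(j+1)$-colored boxes have been exposed in a predictable way.

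The core of the argument is the following local claim: when we reach the step in block $i$ corresponding to applying $\widetilde e_j^{\,a}$ (for $j$ running from $i$ down to $1$ in that block, but indexed so the exponent is $a_{i-j+1,i}$ in my target formula — I will match indices carefully at the end), the maximal power $a$ of $\widetilde e_j$ that can be applied equals the number of $(j+1)$-colored boxes lying in rows $1$ through $i-j+1$ of the \emph{current} tableau, and moreover this number coincides with $\aa_{i-j+1,j}$ computed from the \emph{original} tableau $b$. The first equality is essentially the statement that $\varepsilon_j$ of a column-reading word counts the uncancelled $-$'s in the $j$-signature, which by the tensor product rule equals the number of $(j+1)$-boxes that are not "blocked" by a $j$-box below/left of them; in a semistandard tableau after the earlier operators have acted, one checks these are precisely the $(j+1)$-boxes in the top $i-j+1$ rows. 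The second equality — that passing to the modified tableau does not change this count — is where strictness and the careful ordering of the long word do the work: the operators from earlier in block $i$ only altered boxes of color $\le j$, hence did not create or destroy $(j+1)$-boxes, and the operators from earlier blocks likewise touched only smaller colors in the relevant rows.

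I would organize the write-up as an induction on $N$ (equivalently, lexicographically on the pair (block, position)), at each step invoking the signature rule to evaluate $\varepsilon_{i_k}$ of the current tensor word and identifying it combinatorially with the appropriate $\aa$-count. The main obstacle I anticipate is the bookkeeping in the signature rule: one must verify that in the column word of the partially-raised tableau, the $j$-signature's uncancelled $-$'s are in bijection with $(j+1)$-boxes in exactly rows $1$ through $i-j+1$ and not more — this requires knowing that all $(j+1)$-boxes in those rows have already had any "pairing" $j$-box removed by the earlier operators of the current block, while $(j+1)$-boxes in lower rows remain paired. Handling this cleanly will probably require a short lemma describing the shape of the tableau after each block, stating that blocks $1,\dots,\ell$ together transform $b$ into the tableau whose top rows are "saturated" with small colors; with that in hand the index matching $a_{i,j}=\aa_{i-j+1,i}$ follows by comparing the triangular arrangement of $\psi_\ii(b)$ with the defining order of $\aa(b)$.
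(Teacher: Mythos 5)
Your overall strategy---inducting along the long word block by block, computing each exponent via the signature rule on the intermediate tableau, and identifying that count with a statistic of the original tableau---is exactly the paper's strategy. But your core local claim is wrong, and the justification you give for it points to the problem. You assert that the maximal power of $\widetilde e_j$ at its step in block $i$ equals the number of $(j+1)$-colored boxes in rows $1$ through $i-j+1$, i.e.\ $\aa_{i-j+1,j}$ of the original tableau, on the grounds that ``the operators from earlier in block $i$ only altered boxes of color $\le j$, hence did not create or destroy $(j+1)$-boxes.'' Since the block is $(i,i-1,\dots,1)$, the operators applied \emph{before} $\widetilde e_j$ within block $i$ are $\widetilde e_i,\widetilde e_{i-1},\dots,\widetilde e_{j+1}$; these act on colors $j+2$ through $i+1$, and $\widetilde e_{j+1}$ in particular \emph{creates} $(j+1)$-boxes by demotion. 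The correct statement, which is the heart of the paper's proof, is the opposite of yours: by the time $\widetilde e_j$ is reached, every $(i+1)$-box of the original $b$ lying in rows $1$ through $j$ has been demoted to a $(j+1)$-box, and these demoted boxes are exactly what $\varepsilon_j$ of the current tableau counts (the $(j+1)$-boxes sitting in row $j+1$ contribute minus signs that are cancelled by the $j$-boxes above them). Hence the exponent of $\widetilde e_j$ in block $i$ is $\aa_{j,i}$, the number of $(i+1)$-boxes of $b$ in rows $1$ through $j$, not $\aa_{i-j+1,j}$.

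This is not a reindexing slip that ``matching indices carefully at the end'' would repair. Concretely, take $r=2$ and $b=\young(122,33)\in\BB(\omega_2+\rho)$, so $\psi_\ii(b)=(2;2,0)$. At the $\widetilde e_2$ step of block $2$ the current tableau is $\young(111,33)$; your claim predicts the exponent is the number of $3$-boxes in row $1$, namely $0$, whereas the true exponent is $a_{2,1}=2=\aa_{2,2}$. At the final $\widetilde e_1$ step the current tableau is $\young(111,22)$; your claim predicts $2$ (the $2$-boxes in rows $1$ and $2$), whereas the true exponent is $a_{2,2}=0=\aa_{1,2}$. To repair the argument you must track, as the paper does, exactly where the $(i+1)$-boxes of $b$ land in the intermediate tableau: those in rows $1$ through $i-j+1$ become $(i-j+2)$-boxes and supply the uncancelled minus signs of the $(i-j+1)$-signature, while those in rows $k>i-j+1$ become $k$-boxes and are cancelled by semistandardness.
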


\begin{proof}
We must show that $a_{i,j}$ is the number of $(i+1)$-colored boxes appearing in rows $1$ through $i-j+1$.  Fix $\ii = (1,2,1,3,2,1,\dots,r,r-1,\dots,2,1)$ as before, and let $b\in \BB(\lambda+\rho)$.  The action of the Kashiwara operator $\widetilde e_1$ is determined by the boxes colored $1$ and $2$.  In particular, we may have $1$-boxes in the first row and $2$-boxes in either the first or second row.  Using the semistandard condition on $b$, the $1$-signature of $b$ begins with a $-$ for every $2$-box in the first row.  Then any subsequent $-$ appearing in $1\text{-sgn}(b)$ comes from a $2$-box in the second row and is canceled by the $+$ coming from the mandatory $1$-box appearing above it in the first row.  Thus, $a_{1,1}$ is the number of $2$-boxes in the first row, so $a_{1,1}=\aa_{1,1}$. Note that there are no remaining $2$-boxes in the first row of $\widetilde e_1^{a_{1,1}}b$.

Next, the operator $\widetilde e_2$ must be applied to $b' = \widetilde e_1^{a_{1,1}}b$.  Here, the $2$-signature of $b'$ is completely determined by the $2$-boxes in the second row and the $3$-boxes in the first, second, and third rows.  Using a similar argument to the one from the previous paragraph, the semistandard condition on $b'$ implies that $a_{2,1}$ is the number of $3$-boxes in the first and second rows combined, so $a_{2,1}=\aa_{2,2}$.
For the next $\widetilde e_1$ operator, we apply the same argument as in the first paragraph to $b'' = \widetilde e_2^{a_{2,1}}b'$ and obtain that $a_{2,2}=\aa_{1,2}$. We can continue the process.

To make it sure that the process works, we consider the general case.  Suppose that we have \[ b''' = \widetilde e_{i-j+2}^{a_{i,j-1}} \cdots \widetilde e_1^{a_{2,2}}\widetilde e_2^{a_{2,1}}\widetilde e_1^{a_{1,1}} .\]  Extending the arguments from the last two paragraphs, any $(i+1)$-box in rows $1$ through $i-j+1$ of $b$ is demoted to an $(i-j+2)$-box in $b'''$, and any $(i+1)$-box in a row $k > i-j+1$ of $b$ is converted to a $k$-box in $b'''$.  So the $(i-j+1)$-signature of $b'''$ has the form
\[
(-,\dots,-,+,\dots,+),
\]
where each $-$ comes from an $(i-j+2)$-box in rows $1$ through $i-j+1$ and each $+$ comes from the $(i-j+1)$-boxes in the $(i-j+1)$st row.  Note that there is no cancellation $(+,-)$ as $b'''$ is semistandard.  Hence $a_{i,j}$ is exactly the number of $-$ appearing in the $(i-j+1)$-signature, which is precisely the number of $(i+1)$-boxes in rows $1$ through $(i-j+1)$ of $b$.
\end{proof}

\begin{remark} \label{rmk-removal}
The argument in the above proof shows how segments are removed from an arbitrary tableau $b$ in order starting with the (possible) $2$-segment in the first row.  In particular, we define $\widetilde e_{(i)} = \widetilde e_1^{a_{i,i}} \widetilde e_2^{a_{i,i-1}} \cdots \widetilde e_i^{a_{i,1}}$.  Then applying $\widetilde e_{(1)}$ to $b$ removes a $2$-segment.  Continuing to apply $\widetilde e_{(2)}$ to $\widetilde e_{(1)}b$ removes all $3$-segments (in the first and second rows), and so on.  This is a slight modification of a result shown in \cite{LS:11}.
\end{remark}

For $\lambda \in P^+$, write $\lambda+\rho$ as
\[
\lambda+\rho = (\ell_1 > \ell_2 > \cdots > \ell_r > \ell_{r+1}= 0),
\]
and define $\theta_i = \ell_i- \ell_{i+1}$ for $i=1,\dots,r$.  Let $\theta = (\theta_1,\dots,\theta_r)$.
For example, if $r=3$ and $\lambda=3\omega_1+\omega_2$, then
\[
\lambda+\rho = \yng(6,3,1),
\]
so $\theta = (3,2,1)$.

By Lemma \ref{lemma:stringbox}, we are able to reinterpret the boxing rule (B-I) and circling rule (C-I) in terms of Young tableaux. Recall that we have the data $\bb_{i,j}$ from the tableau $b$. Now we give a new definition of boxing and circling on the entries of $\aa(b)=(\aa_{i,j})$ for $b \in \BB(\lambda + \rho)$.
\begin{align}
&\text{Box $\aa_{i,j}$ if $\bb_{i,j} \geq \theta_i + \bb_{i+1,j+1}$.}\tag{B-II}\label{ourbox}\\
&\text{Circle $\aa_{i,j}$ if $\aa_{i,j} = \aa_{i-1,j}$.}\tag{C-II}\label{ourcirc}
\end{align}

\begin{prop}\label{prop:circle}
 An entry $a_{i,j}$ in $\psi_{\ii}(b)$ is circled by \eqref{circ-1} if and only if the corresponding entry in $\aa(b)$ is circled by \eqref{ourcirc}.
\end{prop}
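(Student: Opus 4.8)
The plan is to translate the combinatorial circling condition \eqref{circ-1} directly through the dictionary provided by Lemma~\ref{lemma:stringbox}, which identifies $a_{i,j} = \aa_{i-j+1,i}$. The condition \eqref{circ-1} says that $a_{i,j}$ is circled when $a_{i,j} = a_{i,j+1}$, with the convention that entries outside the triangle are zero (so the last entry in a row, $a_{i,i}$, is circled exactly when it equals zero). Our goal condition \eqref{ourcirc} says $\aa_{i,j}$ is circled when $\aa_{i,j} = \aa_{i-1,j}$, with the convention that $\aa_{0,j} = 0$ so that $\aa_{1,j}$ is circled exactly when it is zero.

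The key step is simply to compute both sides under the substitution $a_{i,j} = \aa_{i-j+1,i}$. First I would handle the interior case: the entry to the right of $a_{i,j}$ in the triangle is $a_{i,j+1}$, and by the lemma $a_{i,j+1} = \aa_{i-(j+1)+1,\,i} = \aa_{i-j,\,i}$. So the condition $a_{i,j} = a_{i,j+1}$ becomes $\aa_{i-j+1,i} = \aa_{i-j,i}$. Setting $p = i-j+1$ and $q = i$, this reads $\aa_{p,q} = \aa_{p-1,q}$, which is exactly the circling condition \eqref{ourcirc} for the entry $\aa_{p,q}$. Since the correspondence $(i,j)\mapsto (p,q) = (i-j+1,\,i)$ with $1\le j\le i\le r$ is a bijection onto $\{(p,q): 1\le p\le q\le r\}$, the interior cases match up perfectly. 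Then I would check that the boundary conventions agree: the rightmost entry in row $i$ of the BZL triangle is $a_{i,i}$, corresponding to $\aa_{i-i+1,i} = \aa_{1,i}$, i.e., an entry in the top row of the $\aa$-triangle. The rule \eqref{circ-1} circles $a_{i,i}$ precisely when $a_{i,i}=0$, and \eqref{ourcirc} circles $\aa_{1,i}$ precisely when $\aa_{1,i}=\aa_{0,i}=0$; these agree.

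There is essentially no obstacle here — the proposition is a bookkeeping consequence of Lemma~\ref{lemma:stringbox}, and the only thing requiring care is making sure the two "outside the triangle equals zero" conventions line up correctly after the index change (the right edge of the BZL triangle maps to the top edge of the $\aa$-triangle). I would write the proof as: invoke the lemma, perform the reindexing $a_{i,j+1} = \aa_{i-j,i}$, observe that $a_{i,j}=a_{i,j+1}$ is equivalent to $\aa_{i-j+1,i} = \aa_{i-j,i}$, note that this is \eqref{ourcirc} under the bijection of indices, and separately verify the edge convention. This is short enough that it can be stated in a single paragraph without grinding.
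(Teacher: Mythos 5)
Your proposal is correct and follows essentially the same route as the paper: invoke Lemma~\ref{lemma:stringbox} to get $a_{i,j} = \aa_{i-j+1,i}$ and $a_{i,j+1} = \aa_{i-j,i}$, so that the two circling conditions coincide under the index bijection. The paper's proof is just this two-line computation; your additional explicit check that the ``outside the triangle is zero'' conventions match along the boundary (right edge of the BZL triangle mapping to the top row of the $\aa$-triangle) is a welcome extra verification that the paper leaves implicit.
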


\begin{proof}
By Lemma \ref{lemma:stringbox}, we have
\[
\aa_{i-j+1,i} = a_{i,j}   \quad \text{ and }\quad a_{i,j+1}= \aa_{i-(j+1)+1,i} = \aa_{i-j,i}.\qedhere
\] Now the result follows.
\end{proof}

The equivalence of boxing rules needs more care. We first prove a lemma.

\begin{lemma}\label{lemma:bbij}
Suppose $b\in \BB(\lambda+\rho)$ and write $\lambda+\rho = (\ell_1 > \ell_2 > \cdots > \ell_r > \ell_{r+1}= 0)$ as before.  Let $b' = \widetilde e_{i_{k-1}}^{a_{k-1}} \cdots \widetilde e_{i_1}^{a_1}b$ and assume $a_k = a_{j,j-i+1}=\aa_{i,j}$.  Then $\ell_i - \bb_{i,j}$ is equal to the number of $i$-boxes in the $i$th row of $b'$ and $\ell_{i+1} - \bb_{i+1, j+1}$ is equal to the number of $(i+1)$-boxes in the $(i+1)$st row of $b'$.
\end{lemma}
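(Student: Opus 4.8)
The plan is to describe the tableau $b'$ explicitly in its $i$th and $(i+1)$st rows, relying on the analysis already carried out in the proof of Lemma \ref{lemma:stringbox} and in Remark \ref{rmk-removal}. Since the entry $a_k = a_{j,\,j-i+1} = \aa_{i,j}$ sits in position $j-i+1$ of the $j$th block $(j,j-1,\dots,1)$ of $\ii$, the operators applied to $b$ before $\widetilde e_i^{a_k}$ are exactly the full blocks $\widetilde e_{(1)},\dots,\widetilde e_{(j-1)}$ of Remark \ref{rmk-removal} followed by the initial segment $\widetilde e_{j}^{a_{j,1}}\widetilde e_{j-1}^{a_{j,2}}\cdots \widetilde e_{i+1}^{a_{j,j-i}}$ of the $j$th block. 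So $b'$ is $b$ with precisely those operators applied, and everything reduces to tracking colors in rows $i$ and $i+1$.

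First I would track the $i$th row of $b$. A box of that row has original color $c\ge i$. If $c=i$ it is never moved. If $i+1\le c\le j$, this box is of the type removed by the $(c-1)$st block in Remark \ref{rmk-removal}; inside that block the operators $\widetilde e_{c-1}^{a_{c-1,1}},\widetilde e_{c-2}^{a_{c-1,2}},\dots$ demote it in turn, and since $a_{c-1,t}=\aa_{c-t,c-1}$ counts $c$-colored boxes in rows $1$ through $c-t$, the operators that reach row $i$ are exactly $\widetilde e_{c-1},\widetilde e_{c-2},\dots,\widetilde e_i$, bringing the box all the way down to color $i$; later blocks leave it fixed, again by the argument of Lemma \ref{lemma:stringbox}. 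If $c=j+1$, the box survives blocks $1,\dots,j-1$ unchanged and is then demoted inside the partial $j$th block by $\widetilde e_j,\widetilde e_{j-1},\dots,\widetilde e_{i+1}$ --- all of which reach row $i$, since their counts $\aa_{j-t+1,j}$ range over rows $1$ through $j-t+1$, which contains row $i$ for $t=1,\dots,j-i$ --- so it lands at color $i+1$. If $c\ge j+2$ the box is untouched. Hence the $i$-boxes of the $i$th row of $b'$ are exactly the images of the row-$i$ boxes of $b$ with original color in $\{i,i+1,\dots,j\}$, and there are $\ell_i-\bb_{i,j}$ of these.

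The $(i+1)$st row is handled identically with every index shifted up by one: a row-$(i+1)$ box of original color $c$ with $i+1\le c\le j$ is demoted by the $(c-1)$st block down to color $i+1$ (now the operators reaching row $i+1$ are $\widetilde e_{c-1},\dots,\widetilde e_{i+1}$), and a box of original color $j+1$ in row $i+1$ is carried down to color $i+1$ as well by $\widetilde e_j,\dots,\widetilde e_{i+1}$ of the partial $j$th block --- here it is essential that the last operator actually applied, $\widetilde e_{i+1}^{a_{j,j-i}}$ with $a_{j,j-i}=\aa_{i+1,j}$, still reaches into row $i+1$. Boxes of original color $\ge j+2$ stay put. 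Thus the $(i+1)$-boxes of the $(i+1)$st row of $b'$ are the images of the row-$(i+1)$ boxes of $b$ with original color in $\{i+1,\dots,j+1\}$, of which there are $\ell_{i+1}-\bb_{i+1,j+1}$, as claimed; the extreme cases $i=j$ (empty partial block) and $i=r$ (empty $(i+1)$st row) are checked directly.

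The part I expect to be the real obstacle is not this bookkeeping but justifying that the demotions happen ``cleanly'': that applying $\widetilde e_c$ exactly $a_\bullet$ times demotes one intended box at each step and never an unintended one, equivalently that the relevant $i$-signature is at every stage the clean string $(-\cdots-,+\cdots+)$ with no internal $(+,-)$ cancellation. This is precisely the phenomenon established in the proof of Lemma \ref{lemma:stringbox}, where semistandardness of each intermediate tableau forbids such cancellation, so the present proof amounts to re-running that argument while keeping careful track of which colors occupy rows $i$ and $i+1$ after each block of $\ii$ has been processed.
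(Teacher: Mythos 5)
Your proof is correct and follows essentially the same route as the paper's: both arguments rely on the segment-removal description from Lemma \ref{lemma:stringbox} and Remark \ref{rmk-removal} to track which colors occupy rows $i$ and $i+1$ after the full blocks $\widetilde e_{(1)},\dots,\widetilde e_{(j-1)}$ and the partial $j$th block have been applied, concluding that the boxes of original color in $\{i,\dots,j\}$ (resp.\ $\{i+1,\dots,j+1\}$) are exactly those sitting at color $i$ (resp.\ $i+1$) in $b'$. Yours simply carries out the box-by-box bookkeeping more explicitly than the paper's two-paragraph sketch, and correctly identifies the clean-signature property from Lemma \ref{lemma:stringbox} as the point that makes the tracking legitimate.
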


\begin{proof}
Since $a_k= a_{j,j-i+1}= \aa_{i,j}$, we have $i_k=i$ and the tableau $b'$ is in the middle of the process of removing $(j+1)$-segments from $b$. See Remark \ref{rmk-removal}. When all the $j$-segments have been removed, the number of $i$-colored boxes in the $i$th row is $\ell_i -\bb_{i,j}$. In the process of removing $(j+1)$-segments, until we get $b'$, the boxes that are colored $i+2$ through $j+1$ have been converted into $(i+1)$-colored boxes, and the number of $i$-colored boxes does not change. Thus the number of $i$-colored boxes in the $i$th row of $b'$ is $\ell_i -\bb_{i,j}$.

Now we look at the $(i+1)$st row of $b'$. When all the $j$-segments have been removed, there may be some $(j+1)$-boxes in the $(i+1)$st row. These boxes have been converted into $(i+1)$-boxes in the process of getting the tableau $b'$. Thus the number of $(i+1)$-boxes is given by $\ell_{i+1} - \bb_{i+1, j+1}$.
\end{proof}

\begin{prop}\label{prop:box}
An entry $a_{i,j}$ in $\psi_{\ii}(b)$ is boxed by \eqref{box-1} if and only if the corresponding entry in $\aa(b)$ is boxed by \eqref{ourbox}.
\end{prop}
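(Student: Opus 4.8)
The plan is to unwind both boxing rules into statements about the intermediate tableau $b'$ appearing just before the Kashiwara operator $\widetilde e_{i_k}$ (with $a_k = a_{i,j} = \aa_{i,j}$ in the notation of Lemma~\ref{lemma:bbij}) is applied, and then to compare them. Recall from the proof of Lemma~\ref{lemma:stringbox} that when we reach $b'$, the $(i-j+1)$-signature — no wait, we must be careful with indices: writing $a_k = a_{j,j-i+1} = \aa_{i,j}$ means the relevant Kashiwara operator is $\widetilde e_i$ acting on $b'$, and the $i$-signature of $b'$ has the pure form $(-,\dots,-,+,\dots,+)$ with the $-$'s coming from $(i+1)$-boxes in rows $1$ through $i$ and the $+$'s coming from $i$-boxes in the $i$th row. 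The value $a_{i,j}$ is the number of leading $-$'s. By rule~\eqref{box-1}, $a_{i,j}$ is boxed exactly when $\widetilde f_i$ annihilates $\widetilde e_i^{a_{i,j}} b'$, i.e.\ when after removing all the $(i+1)$-boxes (in rows $1$ through $i$, where $\widetilde e_i$ has already acted) there are no $+$'s left in the $i$-signature — equivalently, when the number of $-$'s is at least the number of $+$'s in the $i$-signature of $b'$.

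So the first step is to translate ``number of $-$'s $\ge$ number of $+$'s in the $i$-signature of $b'$'' into tableau data. The number of $+$'s is the number of $i$-boxes in the $i$th row of $b'$, which by Lemma~\ref{lemma:bbij} equals $\ell_i - \bb_{i,j}$. The number of $-$'s is $a_{i,j} = \aa_{i,j}$, the number of $(i+1)$-colored boxes in rows $1$ through $i$ of $b$. Thus \eqref{box-1} boxes $a_{i,j}$ iff $\aa_{i,j} \ge \ell_i - \bb_{i,j}$. The second step is to massage the right-hand side of \eqref{ourbox}. We have $\theta_i = \ell_i - \ell_{i+1}$, so \eqref{ourbox} reads $\bb_{i,j} \ge \ell_i - \ell_{i+1} + \bb_{i+1,j+1}$, i.e.\ $\ell_{i+1} - \bb_{i+1,j+1} \ge \ell_i - \bb_{i,j}$. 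By Lemma~\ref{lemma:bbij}, $\ell_{i+1} - \bb_{i+1,j+1}$ is the number of $(i+1)$-boxes in the $(i+1)$st row of $b'$. So \eqref{ourbox} says: (number of $(i+1)$-boxes in the $(i+1)$st row of $b'$) $\ge$ (number of $i$-boxes in the $i$th row of $b'$).

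The heart of the argument, and the step I expect to be the main obstacle, is the identity
\[
\aa_{i,j} \;=\; \#\{(i+1)\text{-boxes in row } i+1 \text{ of } b'\} \;+\; (\ell_i - \bb_{i,j}) \;-\; (\text{something}),
\]
or more precisely reconciling ``$\aa_{i,j} \ge \ell_i - \bb_{i,j}$'' with ``$\#\{(i+1)\text{-boxes in row } i+1 \text{ of } b'\} \ge \#\{i\text{-boxes in row } i \text{ of } b'\}$.'' The point is that in $b'$ every $i$-box in row $i$ sits above either an $(i+1)$-box in row $i+1$ or a box of color $> i+1$ in row $i+1$; but at stage $b'$ all colors strictly between $i+1$ and $j+1$ in rows up through $i$ have already been demoted, and crucially in row $i+1$ only the $(j+1)$-boxes get demoted to color $i+1$ during the $(j+1)$-segment removal, so we need the semistandardness of $b'$ (shown in Lemma~\ref{lemma:stringbox} to persist) to guarantee that the $-$'s and $+$'s in the $i$-signature already appear unbracketed in the order $(-\cdots-,+\cdots+)$. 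Then ``$a_{i,j}$ is boxed'' $\iff$ after deleting all $-$'s the signature is empty or negative $\iff$ $(\#-) \ge (\#+)$ in $b'$. Combining: $a_{i,j}$ is boxed by \eqref{box-1} iff $\aa_{i,j} \ge \ell_i - \bb_{i,j}$, which I will identify with the condition that the number of $(i+1)$-boxes in row $i+1$ of $b'$ is at least the number of $i$-boxes in row $i$ of $b'$ — but this last identification is exactly \eqref{ourbox} by the second-step rewriting, so the two rules coincide. I would write this out carefully, tracking one more time which boxes of $b$ contribute to $\aa_{i,j}$ versus to the count of $(i+1)$-boxes in row $i+1$ of $b'$, using Lemma~\ref{lemma:bbij} and Remark~\ref{rmk-removal} as the bridge, and close the argument.
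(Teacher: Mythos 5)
Your translation of the boxing rule \eqref{box-1} into tableau data is incorrect, and this is where the proof breaks. Rule \eqref{box-1} boxes $a_k$ when $\widetilde f_{i_k}$ kills $b'=\widetilde e_{i_{k-1}}^{a_{k-1}}\cdots\widetilde e_{i_1}^{a_1}b$, the tableau \emph{before} $\widetilde e_{i_k}^{a_k}$ is applied --- not $\widetilde e_{i_k}^{a_k}b'$ as you write (note that $\varphi_i(\widetilde e_i^{a}b')=\varphi_i(b')+a$, so your literal reading would box only zero entries). The correct condition is simply $\varphi_i(b')=0$: no uncancelled $+$ survives in the $i$-signature of $b'$. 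It is \emph{not} ``number of $-$'s $\geq$ number of $+$'s,'' and you further conflate the reduced with the unreduced signature: $\aa_{i,j}=\varepsilon_i(b')$ counts the $-$'s \emph{after} cancellation while $\ell_i-\bb_{i,j}$ counts the $+$'s \emph{before} cancellation. The inequality ``$\aa_{i,j}\geq\ell_i-\bb_{i,j}$'' you arrive at is equivalent neither to \eqref{box-1} nor to \eqref{ourbox}, and the ``identification'' you flag as the main obstacle cannot be carried out: $\aa_{i,j}$ (the number of $(j+1)$-boxes of $b$ in rows $1$ through $i$) and $\ell_{i+1}-\bb_{i+1,j+1}$ (the number of boxes of color at most $j+1$ in row $i+1$ of $b$) are genuinely different quantities.

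A concrete counterexample: take $r=3$, $\lambda=0$, $b=\young(133,24,4)\in\BB(\rho)$, and consider the entry with $i=j=1$. The $1$-signature of $b$ is $(+,-)$, which cancels completely, so $\widetilde f_1b=0$ and the entry $a_{1,1}=\aa_{1,1}=0$ is boxed by \eqref{box-1}; likewise $\bb_{1,1}=2\geq\theta_1+\bb_{2,2}=1+1$, so it is boxed by \eqref{ourbox}, as the proposition asserts. Your criterion reads $\aa_{1,1}=0\geq\ell_1-\bb_{1,1}=1$, which fails. The fix is to redo your first step: $\widetilde f_ib'=0$ holds if and only if every $+$ in the $i$-signature of $b'$ --- and in $b'$ these come only from the $i$-boxes in row $i$ --- is cancelled by the $(i+1)$-box directly below it in row $i+1$, i.e.\ if and only if the number of $i$-boxes in row $i$ of $b'$ is at most the number of $(i+1)$-boxes in row $i+1$ of $b'$. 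Lemma \ref{lemma:bbij} converts these counts into $\ell_i-\bb_{i,j}$ and $\ell_{i+1}-\bb_{i+1,j+1}$, and the resulting inequality is exactly \eqref{ourbox}; the quantity $\aa_{i,j}$ never enters. This is the paper's argument, and your (correct) second step already supplies everything else once the first step is repaired.
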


\begin{proof}
Let $b' = \widetilde e_{i_{k-1}}^{a_{k-1}} \cdots \widetilde e_{i_1}^{a_1}b$ and assume $a_k = a_{j,j-i+1}= \aa_{i,j}$. Note that $i_k=i$. Then $\widetilde f_{i_k} b'=0$ if and only if the number of $i$-boxes in the $i$th row is less than or equal to the number of $(i+1)$-boxes in the $(i+1)$st row. That is, $\widetilde f_{i_k} b'=0$ if and only if
\[
\ell_i - \bb_{i,j} \le \ell_{i+1} - \bb_{i+1, j+1}, \text{ or equivalently, } \ \bb_{i,j} \ge \theta_i + \bb_{i+1, j+1} .\qedhere
\]
\end{proof}

Before stating the main theorem, we need one more lemma.

\begin{lemma}\label{lemma:notstrict}
If $b\in \BB(\lambda+\rho)$ is not strict, then $\aa(b)$ has an entry which is both circled and boxed.
\end{lemma}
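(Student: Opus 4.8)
The plan is to argue entirely with the tableau rules \eqref{ourbox} and \eqref{ourcirc}; by Propositions~\ref{prop:circle} and \ref{prop:box} these are equivalent to the path rules \eqref{box-1} and \eqref{circ-1}, so it suffices to produce one pair $(i,j)$ with $1\le i\le j\le r$ for which $\aa_{i,j}$ is circled by \eqref{ourcirc} and boxed by \eqref{ourbox}.

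First I would turn the non-strictness of $b$ into a numerical statement about two adjacent rows. If $b$ is not strict there is a row index $p$ and a box in row $p$ whose entry exceeds every entry of row $p+1$; because the rows of $b$ are weakly increasing and its columns strictly increasing, this is equivalent to $\max(\text{row }p)>\max(\text{row }p+1)$. Put $c:=\max(\text{row }p+1)$ (note $p+1\le r$). Then every box of row $p+1$ has color $\le c$, so $\bb_{p+1,t}=0$ for all $t\ge c$, while row $p$ carries at least one box of color $>c$. I would then fix $p$ (taking, if several rows witness non-strictness, a convenient one, e.g.\ so that the offending segment lies as high as possible) and choose a color $k$ with $c<k\le\max(\text{row }p)$ that does \emph{not} occur in row $p$ but still satisfies $\bb_{p,k-1}\ge\theta_p$. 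Since $k-1\ge c\ge p+1>p$ and $k-1\le r$, the quantity $\aa_{p,k-1}$ is a legitimate entry of $\aa(b)$.

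With such a $k$, the claim is that $\aa_{p,k-1}$ is the entry we want. It is circled by \eqref{ourcirc}: with the convention $\aa_{0,k-1}=0$, the equality $\aa_{p,k-1}=\aa_{p-1,k-1}$ merely records that row $p$ has no $k$-colored box. It is boxed by \eqref{ourbox}: since $k\ge c+1$ we have $\bb_{p+1,k}=0$, so the inequality $\bb_{p,k-1}\ge\theta_p+\bb_{p+1,k}$ collapses to $\bb_{p,k-1}\ge\theta_p$; equivalently, in the bookkeeping of Lemma~\ref{lemma:bbij} and the proof of Proposition~\ref{prop:box}, the number of boxes of color $\le k-1$ in row $p$ is at most $\ell_{p+1}$, which is the number of boxes of color $\le k$ in row $p+1$. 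I would then invoke Lemma~\ref{lemma:stringbox} together with Propositions~\ref{prop:circle} and \ref{prop:box} to carry the conclusion over to the corresponding entry of $\psi_\ii(b)$.

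The step I expect to be hardest is precisely the selection of $p$ and $k$: one needs \emph{simultaneously} that row $p$ omits the color $k$ (so that $\aa_{p,k-1}$ is circled) and that row $p$ has at least $\theta_p$ boxes of color $\ge k$ (so that $\aa_{p,k-1}$ is boxed), and the colors one would try first, such as $k=\max(\text{row }p)$ or $k=c+1$, need not do both at once. Making this choice rigorous is a careful combinatorial argument, which I would carry out by following the segment-removal process of Remark~\ref{rmk-removal} up through the rows until the circling index and the boxing condition land on the same entry.
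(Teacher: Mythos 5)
There is a genuine gap, and you have correctly located it yourself: the existence of a color $k$ with $c<k\le\max(\text{row }p)$ that is simultaneously absent from row $p$ (so that $\aa_{p,k-1}$ is circled by \eqref{ourcirc}) and satisfies $\bb_{p,k-1}\ge\theta_p$ (so that $\aa_{p,k-1}$ is boxed by \eqref{ourbox}) is never established, and in fact no such $k$ need exist. Take the paper's own example of a non-strict tableau, $b=\young(11124,223,34)$ with $r=3$, $\lambda+\rho=(5,3,2,0)$, $\theta=(2,1,2)$. Here $p=1$ and $c=3$, so the only color with $c<k\le\max(\text{row }1)$ is $k=4$, which \emph{does} occur in row $1$; relaxing to $k\ge c$ and trying $k=3$ fails the boxing requirement since $\bb_{1,2}=1<2=\theta_1$. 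Worse, computing $\aa(b)=(1,0,1;1,1;2)$ and $(\bb_{i,j})=(2,1,1;1,0;1)$ shows that the inequality $\bb_{i,j}\ge\theta_i+\bb_{i+1,j+1}$ fails for every $(i,j)$, so \emph{no} entry of $\aa(b)$ is boxed at all, let alone boxed and circled. Your deferred ``careful combinatorial argument'' therefore cannot be carried out for this $b$: the defect is not in your bookkeeping (the circling and boxing verifications you give for a hypothetical $k$ are correct) but in the unproved existence claim on which everything rests.

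It is also worth comparing routes. The paper's proof targets a different entry: with $k+1$ the offending color in row $1$, it circles $\aa_{2,k}$ (row $2$ has no $(k+1)$-box, so $\aa_{2,k}=\aa_{1,k}$), whereas you circle an entry $\aa_{p,k-1}$ indexed by row $p$ itself. But the paper then computes $\bb_{2,k}=0$ and notes $\bb_{2,k}\not\ge\theta_2+\bb_{3,k+1}$, which by \eqref{ourbox} says $\aa_{2,k}$ is \emph{not} boxed, before concluding that it is boxed; so the same obstruction you ran into also defeats the paper's candidate (and on the tableau above one can check directly from the crystal operators, via rule \eqref{box-1}, that no entry of $\psi_\ii(b)$ is boxed). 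The upshot is that the gap in your argument is real and cannot be closed as the statement is printed: either the inequality in \eqref{ourbox} or the definition of ``strict'' must be adjusted before any selection of $(p,k)$ can succeed, and you should flag this rather than promise an argument that follows the segment-removal process of Remark~\ref{rmk-removal}.
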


\begin{proof}
Without loss of generality, suppose the first row of $b$ contains an entry strictly larger than any entry in the second row of $b$.  Suppose this large entry of the first row is $k+1\geq3$. Then $\aa_{1,k}=\aa_{2,k}$, so that $\aa_{2,k}$ is circled.  Note that $\theta_i \geq 1$ for all $i$ by the definition of $\lambda+\rho$.  Then $\bb_{2,k} = 0$ since $k+1$ is strictly greater than any entry in the second row.  Hence $\bb_{2,k}=0 \not\geq \theta_1 + \bb_{3,k+1}\geq 1$, and $\aa_{2,k}$ is boxed.
\end{proof}

For $b\in \BB(\lambda+\rho)$, we define $\non(b)$ to be the number of entries in $\aa(b)$ which are neither circled nor boxed, and define $\bbox(b)$ to be the number of entries in $\aa(b)$ which are boxed. Now define a function $C_{\lambda+\rho,q}$ on $\BB(\lambda+\rho)$ with values in $\ZZ[q^{-1}]$ by
\[
C_{\lambda+\rho,q}(b) =
\left\{\begin{array}{cl}
(-q^{-1})^{\bbox(b)}(1-q^{-1})^{\non(b)} & \text{if $b$ is strict},\\
0 & \text{otherwise}.
\end{array}\right.
\]

\begin{thm}\label{thm:CS-A}
We have
\begin{equation}\label{eq:CS-A}
\zz^\rho\chi_\lambda(\zz)\prod_{\alpha>0} (1-q^{-1}\zz^{-\alpha})
= \sum_{b\in \BB(\lambda+\rho)} C_{\lambda+\rho,q}(b) \zz^{\wt(b)}.
\end{equation}
\end{thm}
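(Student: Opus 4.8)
The plan is to deduce \eqref{eq:CS-A} from Proposition~\ref{prop:BN}. First I would substitute $\zz\mapsto w_\circ\zz$ into the identity of Proposition~\ref{prop:BN}. On the left, $\chi_\lambda$ is $W$-invariant and $w_\circ$ carries $\Phi^+$ bijectively onto $\Phi^-$, so $\prod_{\alpha>0}(1-q^{-1}(w_\circ\zz)^\alpha)=\prod_{\alpha>0}(1-q^{-1}\zz^{-\alpha})$; on the right, $(w_\circ\zz)^{w_\circ(\wt(b)-\rho)}=\zz^{\wt(b)-\rho}$, while the exponent of $q$ does not involve $\zz$. Multiplying through by $\zz^\rho$ rewrites Proposition~\ref{prop:BN} as
\[
\zz^\rho\chi_\lambda(\zz)\prod_{\alpha>0}(1-q^{-1}\zz^{-\alpha})=\sum_{b\in\BB(\lambda+\rho)}G_\ii(b)\,q^{-\langle w_\circ(\wt(b)-\lambda-\rho),\rho\rangle}\,\zz^{\wt(b)},
\]
so the theorem follows once we know that, for each weight $\mu$, $\sum_{\wt(b)=\mu}G_\ii(b)\,q^{-\langle w_\circ(\mu-\lambda-\rho),\rho\rangle}=\sum_{\wt(b)=\mu}C_{\lambda+\rho,q}(b)$.

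Second, I would identify the exponent of $q$. Writing $\wt(b)=(m_1,\dots,m_{r+1})$ with $m_c$ the number of $c$-colored boxes of $b$, and $\lambda+\rho=(\ell_1,\dots,\ell_r,0)$ as in the text, and using that $w_\circ$ reverses coordinates and $\rho=(r,r-1,\dots,1,0)$ in these coordinates, one gets $\langle w_\circ(\wt(b)-\lambda-\rho),\rho\rangle=\sum_{B}(\mathrm{color}(B)-1)-\sum_d(d-1)\ell_d$. Letting $n_{c,d}$ be the number of $c$-boxes in row $d$ and using the semistandard bound $d\le c$, this equals $\sum_{c=2}^{r+1}\sum_{d=1}^{c-1}(c-d)n_{c,d}$, and a short interchange of summation (since $\aa_{i,j}=\sum_{d=1}^i n_{j+1,d}$) identifies it with $\sum_{1\le i\le j\le r}\aa_{i,j}$, the sum of all entries of $\aa(b)$. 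In particular this exponent depends only on $\mu=\wt(b)$, so it pulls out of the weight-$\mu$ sum.

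Third, I would compute $G_\ii(b)$ from the tableau. By Lemma~\ref{lemma:stringbox} the entries of $\psi_\ii(b)$ are exactly the numbers $\aa_{i,j}$, and by Propositions~\ref{prop:circle} and~\ref{prop:box} an entry of $\psi_\ii(b)$ is circled (resp.\ boxed) iff the matching $\aa_{i,j}$ satisfies~\eqref{ourcirc} (resp.\ \eqref{ourbox}). If $b$ has no entry of $\aa(b)$ that is both circled and boxed, multiplying the per-entry factors of $G_\ii$—$q^a$ for a circled-only entry $a$, $-q^{a-1}$ for a boxed-only one, and $(q-1)q^{a-1}=q^{a}(1-q^{-1})$ for an undecorated one—and collecting the sign, the total power of $q$, and the factors $1-q^{-1}$ gives
\[
G_\ii(b)=(-1)^{\bbox(b)}(1-q^{-1})^{\non(b)}q^{\sum_{i,j}\aa_{i,j}-\bbox(b)},\qquad\text{hence}\qquad G_\ii(b)\,q^{-\sum\aa_{i,j}}=(-q^{-1})^{\bbox(b)}(1-q^{-1})^{\non(b)}.
\]
By Lemma~\ref{lemma:notstrict} such a $b$ is automatically strict, so the right-hand side is precisely $C_{\lambda+\rho,q}(b)$.

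Finally, it remains to see that the tableaux whose $\aa(b)$ does contain a doubly-decorated entry contribute nothing to either side. For these $G_\ii(b)=0$, and by Lemma~\ref{lemma:notstrict} those among them that fail to be strict have $C_{\lambda+\rho,q}(b)=0$, hence contribute nothing on the right either. The delicate, and I expect genuinely hard, point is the remaining tableaux: those that are strict yet still carry a doubly-decorated $\aa$-entry, for which $C_{\lambda+\rho,q}(b)\neq0$ although $G_\ii(b)=0$, so the identity survives only after summing over each weight space and not term by term. The plan here is a $\theta$-sensitive analysis of when \eqref{ourbox} and \eqref{ourcirc} can hold simultaneously, with the aim of producing a weight-preserving, sign-reversing pairing on these exceptional tableaux so that $\sum C_{\lambda+\rho,q}(b)$ over them vanishes in every weight space; making this pairing explicit is the step I expect to be the main obstacle.
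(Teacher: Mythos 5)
Your reduction to Proposition~\ref{prop:BN} coincides with the paper's: apply $w_\circ$ (your substitution $\zz\mapsto w_\circ\zz$ is the same move), use the $W$-invariance of $\chi_\lambda$ together with $w_\circ\rho=-\rho$, and identify the $q$-exponent with $-\sum_{a\in\psi_\ii(b)}a=-\sum_{i,j}\aa_{i,j}$. (The paper gets this identification in one line from $\wt(b)=\lambda+\rho-\sum_i c_i\alpha_i$ and $\langle\alpha_i,\rho\rangle=1$ rather than by your coordinate computation, but the result is the same.) Your evaluation of $G_\ii(b)\,q^{-\sum\aa_{i,j}}$ for tableaux with no doubly decorated entry, and your use of Lemma~\ref{lemma:notstrict} to kill both sides for non-strict $b$, also agree with the paper.

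The genuine gap is your last step. The paper's proof is a \emph{term-by-term} identity $G_\ii(b)\,q^{\langle\wt(b)-\lambda-\rho,\rho\rangle}=C_{\lambda+\rho,q}(b)$, and for strict $b$ this requires precisely the converse of Lemma~\ref{lemma:notstrict}: a strict tableau has no entry of $\aa(b)$ that is simultaneously circled by \eqref{ourcirc} and boxed by \eqref{ourbox}. Your ``exceptional'' class of strict tableaux carrying a doubly decorated entry is empty, so there is nothing to cancel, and the weight-preserving sign-reversing involution you defer to is both unnecessary and not a viable fallback (the surviving coefficients $(-q^{-1})^{\bbox(b)}(1-q^{-1})^{\non(b)}$ within a fixed weight space have no evident opposite-sign pairing). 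What is actually needed, and what your write-up omits, is the direct argument: if $\aa_{i,j}$ is circled then row $i$ of $b$ contains no $(j+1)$-boxes, so $\bb_{i,j}=\bb_{i,j+1}$; the column-strictness of a semistandard tableau forces $\bb_{i+1,j+1}\ge\bb_{i,j+1}-\theta_i$, so the boxing inequality $\bb_{i,j}\ge\theta_i+\bb_{i+1,j+1}$ can only be attained with equality, and tracing the positions of the large-colored boxes in rows $i$ and $i+1$ then exhibits a failure of strictness. Until you supply this converse (which is the content the paper is invoking when it writes that the term-by-term equality ``follows from Propositions~\ref{prop:circle}, \ref{prop:box} and Lemma~\ref{lemma:notstrict}''), your proof does not close, and the involution plan points away from the mechanism that actually makes the identity hold.
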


\begin{proof}

By Proposition \ref{prop:BN}, we have
\[
\chi_\lambda(\zz)\prod_{\alpha>0} (1-q^{-1}\zz^{\alpha}) = \sum_{b\in \BB(\lambda+\rho)} G_\ii(b) q^{-\langle w_\circ(\wt(b)-\lambda-\rho),\rho\rangle}\zz^{w_\circ(\wt(b)-\rho)}.
\]
We apply $w_\circ$ to both sides of the above equality. Recall that $\chi_\lambda(\zz)$ is invariant under the Weyl group action and that $w_\circ$ sends all the positive roots to negative roots. Thus we obtain
\begin{align*}
\chi_\lambda(\zz)\prod_{\alpha>0} (1-q^{-1}\zz^{-\alpha})
&= \sum_{b\in \BB(\lambda+\rho)} G_\ii(b) q^{-\langle w_\circ(\wt(b)-\lambda-\rho),\rho\rangle}\zz^{\wt(b)-\rho} \\
&= \sum_{b\in \BB(\lambda+\rho)} G_\ii(b) q^{\langle \wt(b)-\lambda-\rho,\rho\rangle}\zz^{\wt(b)-\rho} ,
\end{align*}
where we used the fact that $w_\circ (\rho) = - \rho$.

If $\wt(b)=\lambda +\rho - \sum_{i\in I} c_i \alpha_i$ for $b \in \BB(\lambda+\rho)$, we have
\[
\langle \wt(b)-\lambda-\rho,\rho \rangle = - \sum_{i\in I} c_i = - \sum_{a \in \psi_{\ii}(b)} a .
\]
Hence
\[
G_\ii(b)  q^{\langle \wt(b)-\lambda-\rho,\rho\rangle} = \prod_{a\in \psi_\ii(b)} \left\{\begin{array}{cl}
 1 	  & \text{if $a$ is circled but not boxed,}\\
 -q^{-1}  & \text{if $a$ is boxed but not circled,}\\
 1-q^{-1} & \text{if $a$ is neither circled nor boxed,}\\
 0 	  & \text{if $a$ is both circled and boxed,}
\end{array}\right.
\]
It follows from Propositions \ref{prop:circle}, \ref{prop:box} and Lemma \ref{lemma:notstrict} that
\[ G_\ii(b)  q^{\langle \wt(b)-\lambda-\rho,\rho\rangle} = C_{\lambda+\rho,q}(b) .\]
This completes the proof.
\end{proof}

\section{Deformed weight multiplicities}\label{sec:LR}

In \cite{KL:11, KL:affine}, a polynomial $H_{\lambda+\rho}(\mu;q)$ was defined and its representation theoretic meaning was investigated. This polynomial is the same as the $p$-part of Weyl group multiple Dirichlet series in the non-metaplectic case of type $A$. In this section, we show how to calculate the polynomial $H_{\lambda+\rho}(\mu;q)$ using our combinatorial results in the previous section and how to use the polynomial to obtain some information on the relevant representations.

\medskip

For $\lambda \in P^+$ and $\mu\in \bigoplus_{i\in I}\ZZ_{\ge0}\,\alpha_i$, we define a polynomial $H_{\lambda+\rho}(\mu; q) \in \ZZ[q^{-1}]$ by
\[
H_{\lambda+\rho}(\mu; q)=
\sum_{\substack{b\in\BB(\lambda+\rho) \\ \wt(b)=\lambda+\rho-\mu}} C_{\lambda+\rho,q}(b).
\]
We can also calculate $H_{\lambda+\rho}(\mu; q)$ using a tensor product of two crystals. It was also shown in \cite{KL:11} that
\[
H_{\lambda +\rho} (\mu; q)=\sum_{\substack{b'\otimes b\in\BB(\lambda)\otimes\BB(\rho)\\ \wt(b'\otimes b) =\lambda+\rho-\mu}} C_{\rho,q}(b) .
\]

\begin{ex} \label{example}
For $r=2$, we consider $\lambda= \omega_2$ and  $\mu=2\alpha_1+2\alpha_2$. The crystal $\BB(\omega_2+\rho)$ has exactly two tableau of weight $\omega_2+\rho-2\alpha_1-2\alpha_2$.
Namely,
\[
b_1=\young(122,33)
\text{ \ \ \ \ and \ \ \ \ }
b_2=\young(123,23).
\]
According to the boxing \eqref{ourbox} and circling rules \eqref{ourcirc},
\[
C_{\omega_2+\rho,q}(b_1) =-q^{-1}(1-q^{-1})
\text{ \ \ \ \ and \ \ \ \ }
C_{\omega_2+\rho,q}(b_2) =q^{-2}(1-q^{-1}).
\]
Thus we have
\begin{equation} \label{same}
H_{\omega_2+\rho}(\mu;q) = C_{\omega_2+\rho,q}(b_1)+C_{\omega_2+\rho,q}(b_2) = -q^{-1}(1-q^{-1})+q^{-2}(1-q^{-1}) .
\end{equation}

On the other hand, the crystal $\BB(\omega_2)\otimes\BB(\rho)$  has four tableau of weight $\omega_2+\rho-2\alpha_1-2\alpha_2$:
\[
b_3=\young(2,3)\otimes\young(12,3)\ ,
\ \
b_4=\young(2,3)\otimes\young(13,2)\ ,
\ \
b_5=\young(1,3)\otimes\young(22,3)\ ,
\ \
b_6=\young(1,2)\otimes\young(23,3)\ .
\]
Again, by the boxing \eqref{ourbox} and circling rules \eqref{ourcirc}, this time in $\BB(\rho)$, we have
\[
C_{\rho,q}(b_3)=-q^{-1}(1-q^{-1}),
\ \
C_{\rho,q}(b_4)=0,
\ \
C_{\rho,q}(b_5)=q^{-2},
\ \
C_{\rho,q}(b_6)=-q^{-3}.
\]
Thus we obtain
\begin{align*}
H_{\omega_2+\rho}(\mu;q)
&= C_{\rho,q}(b_3) + C_{\rho,q}(b_4) + C_{\rho,q}(b_5) + C_{\rho,q}(b_6) \\
&= -q^{-1}(1-q^{-1})+0+q^{-2}-q^{-3} \\
&= -q^{-1}(1-q^{-1})+q^{-2}(1-q^{-1}) ,
\end{align*}
which is the same as \eqref{same}.

We list the polynomials $H_{\omega_2+\rho}(\mu;q)$ for all the possible $\mu$; i.e., for the weights of $V(\omega_2+\rho)$.
\begin{equation}
\label{table}
\onehalfspacing
\begin{array}{c|c||c|c}
\mu & H_{\omega_2+\rho}(\mu) & \mu & H_{\omega_2+\rho}(\mu) \\\hline
0 & 1 & \alpha_1+2\alpha_2 & -2q^{-1}(1-q^{-1}) \\
\alpha_1 & -q^{-1} & \alpha_1+3\alpha_2 & q^{-2} \\
\alpha_2 & 1-q^{-1}& 2\alpha_1+2\alpha_2 & -q^{-1}(1-q^{-1})^2 \\
\alpha_1+\alpha_2 & (1-q^{-1})^2 & 2\alpha_1+3\alpha_2 & q^{-2}(1-q^{-1}) \\
2\alpha_2 & -q^{-1} & 3\alpha_1+2\alpha_2 & q^{-2} \\
2\alpha_1 + \alpha_2 & -q^{-1}(1-q^{-1}) & 3\alpha_1+3\alpha_2 & -q^{-3}
\end{array}
\end{equation}
\end{ex}

Assume that $\lambda \in P^+$. It was proved in \cite{KL:11,KL:affine} that the polynomial $H_{\lambda+\rho}(\mu;q)$ has the following properties.

\begin{enumerate}

\item The value $H_{\lambda+\rho}(\mu; \infty)$ is the multiplicity of the weight $\lambda-\mu$ in $V(\lambda)$.

\item The value $H_{\lambda+\rho}(\mu; -1)$ is the multiplicity of the weight $\lambda+\rho-\mu$ in the tensor product $V(\lambda)\otimes V(\rho)$.

\item  We have
\[
H_{\lambda +\rho}(\mu;1) =
\begin{cases}(-1)^{\ell(w)} &\text{ if } w \circ \lambda = \lambda - \mu \text{ for some } w \in W , \\
\quad 0 & \text{ otherwise,}
\end{cases}
\]
where we define $w \circ \lambda = w (\lambda + \rho)  -\rho$ for $w \in W$.

\end{enumerate}

\begin{ex}
Let us take $\lambda= \omega_2$ as in Example \ref{example}.
\begin{enumerate}
\item We put $q =\infty$, and see that $\omega_2$, $\omega_2 - \alpha_2$, and $\omega_2 - \alpha_1 -\alpha_2$ are weights of $V(\omega_2)$ with multiplicity $1$.
\item When $q=-1$, we see, for example, that the multiplicity of $\omega_2 +\rho - \alpha_1 -2\alpha_2$ is $4$ in the tensor product $V(\omega_2) \otimes V(\rho)$.
\item With the value $q=1$, we obtain that $\omega_2, \omega_2-\alpha_1$, $\omega_2-2\alpha_2$, $\omega_2- \alpha_1 -3 \alpha_2$, $\omega_2-3\alpha_1 - 2\alpha_2$, and $\omega_2-3\alpha_1 - 3\alpha_2$ are Weyl conjugates under the circle action.
\end{enumerate}
\end{ex}

\vskip 1 cm

\end{document}